\newtheorem{thm}{Theorem}[section]
\newtheorem{lemma}{Lemma}[section]
\newtheorem{prop}{Proposition}[section]
\theoremstyle{definition}
\newtheorem{rem}{Remark}[section]
\newtheorem{rems}{Remarks}[section]
 \author{Mário M. Gra\c{c}a and M. Esmeralda Sousa-Dias   \thanks{Departamento de Matem\'{a}tica,
Instituto Superior T\'ecnico, Universidade Técnica de Lisboa, Av. Rovisco Pais, 1049--001 Lisboa, Portugal. Corresponding author: mgraca@math.ist.utl.pt.}}
\begin{document}
 
\title{A unified framework for  the computation of polynomial quadrature  weights and errors\footnote{Dedicated to the memory of Bernard Germain-Bonne (1940--2012).}}
\maketitle

%
%

 \begin{abstract}

\noindent
 For the class of polynomial quadrature rules we show that  conveniently chosen bases allow to compute both the weights and the  theoretical error expression of a $n$\--point rule via the undetermined coefficients method. As an illustration, the framework  is  applied  to some classical  rules such  as Newton\--Cotes, Adams\--Bashforth, Adams\--Moulton and  Gaussian rules. 
\noindent

   \end{abstract}

\medskip
{\it Key-words}:  Quadrature, undetermined coefficients method,  degree of precision, orthogonal polynomial.

\medskip
\noindent
{\it MSC2010}: 65D30, 65D32, 65D05,  42C05.

\section{Introduction}\label{introd}

Most  numerical integration schemes widely used in the applications are based on the class of polynomial quadrature.
Here we present   an unified framework for the simultaneous computation of weights and theoretical error of a $n$-point polynomial quadrature rule $Q_n(f)$.  
We aim to bring together the computation of the pair $\text{(weights, error)}=(W_n, E_n)$ under a simple and unified framework. Our approach shows that it is possible to choose a particular polynomial basis  relative to which the weights $W_n$  are the solution of an upper triangular linear system,  enabling so their   recursive computation. At the same time, by extending conveniently  the referred basis,  we obtain a (computable)  criterium for   the degree of precision  of the rule and consequently  the respective error expression $E_n$.  

The  upper triangular system, whose  solution is the vector $W_n$ of  weights, is obtained by  the technique known in numerical analysis as undetermined coefficient method (not to be confused with the same named method  for ODE's).  The undetermined coefficients method (UCM) is often employed  for   weights's computation, however this method has been  set aside  for  the determination of  $E_n$  (see for instance the remarks in Gautschi \cite{gautschi}, p.~176, and Evans \cite{evans}, p.~69).  To the best of our knowledge,  our  scheme for the  simultaneous computation of the  weights and error via the UCM   is new.

In  numerical quadrature one aims     to approximate an integral  $I(f)=\int_a^b f(x) dx$ by a $n$-point rule $Q_n(f)=\sum_{k=1}^n a_i f(x_i)$,  and   to obtain an expression or a bound for the error  $E_n(f)=I(f)-Q_n(f)$. The coefficients $a_i$ in $Q_n(f)$ are called the (quadrature) weights and the points $x_i$ in a given set $\{x_1, \ldots, x_n\}$ are usually called nodes (a.k.a.  quadrature points, or abscissae).
 The rule $Q_n(f)$ is said to be  polynomial  interpolatory  when it is exact  for polynomials of degree less or equal to $n-1$. That is, when $Q_n(p)=I(p)$ for all polynomials $p$ of degree less or equal to $n-1$.
 Hereafter when we  refer to a rule we mean a polynomial   quadrature rule.

Quadrature has a long history and is  a classical subject, although it is still nowadays an active area of research  (general references are, for instance,  Atkinson \cite{atkinson},  Davis and Rabinowitz \cite{davis}, Gautschi \cite{gautschi} and Krylov \cite{krylov}).  There are  {\it a priori}  two main goals when dealing with numerical quadrature: the determination of the weights and  respective  theoretical error expression. Traditionally, in order to obtain the  weights,  one first   determines the  interpolating polynomial for a  given  panel $\{(x_1,f(x_1)), (x_2,f(x_2)) \ldots, (x_n,f(x_n))\}$  and afterwards this polynomial is  integrated. The interpolating polynomial is unique (and so are the weights of the rule) but  it may be written in several different forms   named after  Lagrange, Newton, Hermite, Chebyshev, etc. Of course, this means  that we can always write the interpolating polynomial in distinct  polynomial bases. 

 In the classical  literature  the theoretical  quadrature error is usually deduced from a particular  expression representing  the interpolation error \--- see Steffenson \cite{steffensen} for  a survey on interpolation theory,  and  Berezin and Zhidkov \cite{berezin}, Ch. 2,  for its applications. Consequently, the  quadrature error deduction is  based upon   a panoply of analytical tools such as mean value theorems, integration by parts, Peano kernels, Euler-MacLaurin formula and so on.  The usual approach for the computation of the pair $(W_n, E_n)$,  where $W_n=(a_1, \ldots, a_n)$,  is   done on a rule by rule basis leading usually to lengthy and cumbersome calculations even for a small number $n$ of nodes. 

At the heart of our approach are  conveniently chosen  bases of polynomials relative to which we apply the undetermined coefficients method  in order to compute  the pair $(W_n, E_n)$. Only elementar results from  linear algebra and numerical analysis are needed.

The paper is organized as follows. There are two sections, in the first one we prove the main result (Theorem~\ref{prop1}), which  points out   an algorithm for computing the pair $(W_n,E_n)$.  In the following section we illustrate its   applicability to  well known quadrature rules named after their creators: Newton-Cotes, Adams-Basforth, Adams-Moulton, and Gaussian rules. This shows the wide applicability of the algorithm previously referred.  In particular,  when applying  Theorem~\ref{prop1} to  Gaussian quadratures  we recover (in Proposition \ref{error Gauss}) a well known   closed form for their error expression.

We  barely address here numerical computational issues since they are out of the scope of this work, and will be treated elsewhere. However, several symbolic/numerical tests were carried out in order to compute   pairs $(W_n,E_n)$ for the  illustrative quadrature rules mentioned above. For, we  developed   a {\sl Mathematica} code translating the algorithm in  Theorem~\ref{prop1}   and we have  tested it on all the  rules described in Section~\ref{sec3}. The  respective computed weights and error expressions    were compared with those    in   tables spread in the literature. In particular, for a number of nodes   $2\leq n\leq 256$, the algorithm has been used to   compute  $100$\--digits precision  pairs $(W_n,E_n)$ for the ubiquitous and indispensable  Gauss-Legendre rule.

 \section{Weights and error for a polynomial    rule}\label{sec1}
 
 In order to approximate the integral $I(f)=\int_a^b f(x) dx$, where $f$ is assumed to be an integrable  function in $(a,b)$, and of class $C^k ([a,b])$ (where the integer $k$ will be taken   appropriately  to each case), one  constructs the  rule 
\begin{equation}\label{rule}
Q_n(f)= a_1f(x_1)+a_2 f(x_2)+\cdots+a_n f(x_n),
\end{equation}
from a given set  $\{x_1,x_2, \ldots, x_n\}$, with $n\geq1$.

The  rule $Q_n(f)$ is said to have degree of precision $d$,  if $I(p)=Q_n(p)$ for all polynomials $p$ of degree $\leq d$, and $I(p)\neq Q_n(p)$ for some polynomial $p$ of degree $d+1$.
 If $d=\operatorname{deg} (Q_n(f))$ denotes  the degree of precision of the rule $Q_n(f)$, and $f$ is of class $C^{d+1}$ in $[a,b]$, the quadrature error is given by 
\begin{equation}\label{error}
E_n(f)= I(f)-Q_n(f)= \frac{I(p_{d+1})-Q_n(p_{d+1})}{(d+1)!} f^{(d+1)}(\xi),
\end{equation}
where $\xi$ and  $p_{d+1}$ are respectively  a certain point in $(a,b)$ and  a  polynomial  of degree $d+1$ (cf. Gautschi \cite{gautschi}, p. 178).
We remark   that the degree of precision of a rule cannot be greater than $(2n-1)$ since, if $p$ is a polynomial of degree $n$ then $I(p^2)$ is positive.

In numerical analysis the undetermined coefficients method  consists in solving the  linear system arising from the equalities $Q_n(g_i)=I(g_i)$, for $0\leq i\leq (n-1)$, where $g_i$ are the elements of a prescribed set of functions. In the  context of polynomial quadrature  this set  will be a set of polynomials.
The next theorem   shows   that when the UCM is applied to a conveniently chosen basis of polynomials,   the weights  are obtained  recursively  and, at the same time,     the degree of  precision $d$ of the corresponding quadrature rule is easily  found.

 \medskip
We denote by $\mathbb{P}_{s}$  the linear space of real  polynomials of degree less or equal to $s$. Suppose  that  $x_1,x_2, \ldots, x_n$ are  $n\geq 1$ distinct  real   points and consider the   basis $B_0=\{\varphi_0(x), \varphi_1(x), \ldots, \varphi_{n-1}(x)\}$ of $\mathbb{P}_{n-1}$,  given by
 \begin{equation}\label{basis fi}
 \left\{\begin{array}{l}
 \varphi_0(x)=1\\
 \varphi_j(x)=\varphi_{j-1}(x) (x-x_j), \quad 1\leq j\leq n-1.
 \end{array}\right.
 \end{equation}
 The basis $B_0$ is known in some literature as Newton's basis (see Steffensen~\cite{steffensen}, p.~23). 
 We complete the  basis $B_0$  with other polynomials $q_n(x), q_{n+1}(x), \ldots, q_{n+k}(x)$ to form a basis  $\mathbb{P}_{n+k}$. Notably,
\begin{equation}\label{def q}
\left\{\begin{array}{l}
 q_n(x)=\varphi_{n-1}(x) (x-x_n)\\
 q_{j}(x)=q_{j-1}(x) (x- x_r), \quad j\geq n+1\quad \mbox{with}\quad  r\equiv j \pmod{n},
 \end{array}\right.
 \end{equation}
 where $k$ is a nonnegative integer which  will be later  specified in accordance with the particular quadrature rule under consideration. Note that the  points $x_i$ ($1\leq i\leq n-1$) are roots  for   all the    polynomials $q_j$ defined in \eqref{def q}.

 \begin{thm}\label{prop1} Let   $x_1, \ldots, x_n$ be  $n\geq 1$ distinct  real quadrature points, $B_0=\{\varphi_0(x), \ldots, \varphi_{n-1}(x)\}$ the basis of $\mathbb{P}_{n-1}$ defined in \eqref{basis fi}, and $B=B_0\cup\{q_n(x), \ldots, q_{n+k}(x) \}$ the basis of $\mathbb{P}_{n+k}$,  defined  in \eqref{def q}.
 
 \begin{itemize}
 \item[(a)] The undetermined coefficient method applied to the basis $B_0$ determines  (uniquely)  the weights  $a_i$ of  the  rule \eqref{rule} for approximating the integral $I(f)=\int_a^bf(x) dx$.
These weights  are 
\begin{equation}\label{coef prop1}
\left\{\begin{aligned}
a_n&=\frac{I(\varphi_{n-1})}{\varphi_{n-1}(x_{n})},\\ 
a_i&=\displaystyle{\frac{I(\varphi_{i-1})-\sum_{k=i+1}^n\varphi_{i-1}(x_{k}) a_k}{\varphi_{i-1}(x_{i})}}, \quad i=n-1, n-2, \ldots, 1.
\end{aligned}\right.
\end{equation}
\item[(b)]The undetermined coefficient method applied to the basis $B$, determines   the degree of precision $d=\operatorname{deg} (Q_n(f))$ as being the  integer $d\geq n-1$ for which
\begin{equation}\label{degree cond}
I(q_{d+1})\neq0 \quad \text{ and}\quad\text{ $I(q_j)=0$, for all $\,\,n\leq j\leq d$.}
 \end{equation}
\item[(c)]  If    $f$ is of class $C^{d+1}([a,b])$,   the error expression is
 \begin{equation}\label{error prop}
 E_{Q_n}(f)= c_n  f^{(d+1)}(\xi), \quad\text{with $\,\,c_n=\frac{I(q_{d+1})}{(d+1)!}$},
 \end{equation}
 for some $\xi\in(a,b)$.
 \end{itemize}
 \end{thm}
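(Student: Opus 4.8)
The plan is to handle the three parts in sequence, leaning throughout on the single structural fact obtained by unrolling the recursion \eqref{basis fi}: namely $\varphi_j(x)=\prod_{i=1}^{j}(x-x_i)$, so that $\varphi_j(x_k)=0$ whenever $k\le j$ and, the nodes being distinct, $\varphi_j(x_k)\neq 0$ whenever $k>j$. For part (a) I would write the UCM equations $Q_n(\varphi_i)=I(\varphi_i)$ for $0\le i\le n-1$ and substitute $Q_n(\varphi_i)=\sum_{k=1}^{n}a_k\varphi_i(x_k)$. The vanishing property annihilates every term with $k\le i$, leaving $\sum_{k=i+1}^{n}a_k\varphi_i(x_k)=I(\varphi_i)$. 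Read from $i=n-1$ downward, these equations form a triangular system: the top one involves only $a_n$ and yields $a_n=I(\varphi_{n-1})/\varphi_{n-1}(x_n)$, and back-substitution then peels off one coefficient at a time, producing exactly \eqref{coef prop1}. Since every diagonal entry $\varphi_{i-1}(x_i)$ is nonzero, the system is nonsingular, so the solution is unique; and because $B_0$ spans $\mathbb{P}_{n-1}$, linearity upgrades $Q_n(\varphi_i)=I(\varphi_i)$ to $Q_n(p)=I(p)$ for all $p\in\mathbb{P}_{n-1}$.

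The crux of the whole argument is the observation underlying part (b): $q_n(x)=\prod_{i=1}^{n}(x-x_i)$ vanishes at every node, and by \eqref{def q} each $q_j$ with $j\ge n$ is a polynomial multiple of $q_n$, so $q_j(x_k)=0$ for all $1\le k\le n$ and hence $Q_n(q_j)=\sum_{k=1}^{n}a_k q_j(x_k)=0$. It follows that $E_n(q_j)=I(q_j)-Q_n(q_j)=I(q_j)$ for every $j\ge n$. Now $B_0\cup\{q_n,\dots,q_d\}$ is a basis of $\mathbb{P}_d$ and $E_n$ annihilates $B_0$ by part (a); by linearity the rule is exact on $\mathbb{P}_d$ precisely when $I(q_j)=E_n(q_j)=0$ for all $n\le j\le d$, and it fails on $\mathbb{P}_{d+1}$ precisely when $E_n(q_{d+1})=I(q_{d+1})\neq 0$. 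This is exactly the condition \eqref{degree cond}, so the integer it isolates is the degree of precision $d$.

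Part (c) is then a one-line consequence of the classical representation \eqref{error}. Each $q_j$ is a product of monic linear factors and is therefore monic of degree $j$, matching the normalization behind \eqref{error}; I would simply take $p_{d+1}=q_{d+1}$. Using $Q_n(q_{d+1})=0$ from part (b), the numerator collapses to $I(q_{d+1})$, giving \eqref{error prop} with $c_n=I(q_{d+1})/(d+1)!$. The choice is legitimate because $d\ge n-1$ forces $d+1\ge n$, and the hypothesis that $B$ extends $B_0$ as far as $q_{n+k}$ with $d+1\le n+k$ guarantees $q_{d+1}$ is one of the listed polynomials. I expect the only genuinely delicate point to be the bookkeeping in part (b): one must verify that exactness on $\mathbb{P}_d$ is equivalent to the vanishing of $E_n$ on the \emph{full} extended basis and not merely on $B_0$, which is precisely why the completion \eqref{def q} is engineered so that every $q_j$ inherits all the nodes as roots. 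Everything else reduces to the triangular structure of the Newton basis and a direct substitution into \eqref{error}.
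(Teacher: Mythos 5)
Your proposal is correct and takes essentially the same route as the paper: part (a) via the upper triangular system coming from the Newton basis $B_0$, part (b) via decomposing an arbitrary polynomial in the extended basis $B$ and using that every $q_j$ vanishes at all nodes (so $Q_n(q_j)=0$ and the error functional reduces to $\sum_j \alpha_j I(q_j)$), and part (c) by substituting $p_{d+1}=q_{d+1}$ into the cited representation \eqref{error}. The only (minor) point the paper makes that you leave implicit is the existence of a finite such $d$, i.e.\ that some $I(q_s)\neq 0$ with $n\leq s\leq 2n$ (indeed $I(q_{2n})>0$ since $q_{2n}=\prod_{i=1}^n(x-x_i)^2\geq 0$), which guarantees the criterion \eqref{degree cond} terminates.
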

\begin{proof} (a) As the rule $Q_n(f)$ is polynomial, applying the UCM to  the basis $B_0$ of the polynomials $\varphi_i$ of degree $i\leq (n-1)$, we have
\begin{equation}\label{eq 7}
Q_n(\varphi_i)= I(\varphi_i), \quad\text{ for all  $\,\,0\leq i\leq (n-1)$}.
\end{equation}
The conditions \eqref{eq 7} are equivalent to the linear system $Ax=b$ in the unknowns $a_1, \ldots, a_n$, where $b=(I(\varphi_0), I(\varphi_1), \ldots, I(\varphi_{n-1}))^T$. The matrix $A$ is a $n\times n$  upper triangular matrix whose  diagonal entries are $1, \varphi_1(x_2),  \ldots$, $\varphi_{n-1}(x_n)$.  Furthermore,  $A$ is obviously nonsingular since $x_{i+1}$ is not a root of $\varphi_i$, for $1\leq i\leq n-1$, and so $\det(A)\neq0$. The (unique)  solution of $Ax=b$ is obtained by backward substitution and is given by \eqref{coef prop1}.
 
(b)  Applying the UCM to the basis $B$, we get an overdetermined  linear system $\tilde{A} x=\tilde{b}$, where $x$ is  the vector whose components are  the weights, and the $(n+k)\times n$ matrix   $\tilde {A}$ and the vector $\tilde{b}$ are,  respectively,   of the form
 \begin{equation}\label{tildeA}
 \tilde{A}=\begin{bmatrix}
 A\\
 \hline
 O
 \end{bmatrix}_{(n+k)\times n} \qquad \tilde{b} = (I(\varphi_0),\ldots, I(\varphi_{n-1}), I(q_n),\ldots,I(q_{n+k}))^T,
\end{equation}
where $O$ denotes the zero matrix. The conditions \eqref{degree cond} in the statement are equivalent to say that $d= \operatorname{deg} (Q_n(f))$ is the greatest  integer $n+k$ for which the system $\tilde{A}x=\tilde{b}$ is compatible. 

Let us now show that  the degree $d$ of the rule $Q_n(f)$ is given by the conditions \eqref{degree cond}.
 Consider  $p$ to  be a polynomial of degree $n+k$ for some nonnegative integer $k$ and write $p$ as  (unique) linear combination of the  elements of the basis $B$ of $\mathbb{P}_{n+k}$. That is,
 $$p =\sum_{j=0}^{n-1} \alpha_j\varphi_j+\sum_{j=n}^{n+k}\alpha_j q_j.
 $$
 Then,
 \begin{equation}\label{th q}
 Q_n(p)= \sum_{j=0}^{n-1} \alpha_j Q_n(\varphi_j)+\sum_{j=n}^{n+k}\alpha_j Q_n(q_j)= \sum_{j=0}^{n-1} \alpha_j Q_n(\varphi_j),
 \end{equation}
 where  the last equality follows from the  fact that all  points  $x_i$ are roots of the polynomials $q_j$. Furthermore, by linearity of the operator $I$, we have
 \begin{equation}\label{th int}
I(p)= \sum_{j=0}^{n-1} \alpha_j I(\varphi_j)+\sum_{j=n}^{n+k}\alpha_j I(q_j).
\end{equation}
As by  \eqref{eq 7}, $Q_n(\varphi_j)=I(\varphi_j)$,  we conclude  from \eqref{th q} and \eqref{th int} that 
 \begin{equation}\label{eq 8}
 I(p)- Q_n(p)=\sum_{j=n}^{n+k}\alpha_j I(q_j).
 \end{equation}
Therefore the degree of precision $d$ cannot be less than $(n-1)$. As there exists a polynomial $p_s$ of  degree $s\leq 2n$ such that $I(p_s)\neq 0$, this implies that $I(q_s)\neq0$ for an integer $s$ such that  $n\leq s\leq 2n$.  It  also follows easily  from \eqref{eq 8} that $Q_n(p)=I(p)$, for all polynomials $p$ of degree $\leq d$,  if and only if $I(q_j)=0$ for all $n\leq j\leq d$. So,  the conditions  \eqref{degree cond} hold.

(c)  As    all the quadrature points are roots of the polynomial $q_{d+1}$,  we have  $Q_n(q_{d+1})=0$.  Then, it follows from   \eqref{error}  that the error's  expression is given by  \eqref{error prop}.

\end{proof}
\begin{rems}\label{rem21}

 \begin{itemize}
\item[1)] It becomes   clear from the proof of Theorem~\ref{prop1}  the reason why the UCM has been set aside for the computation of the degree of a rule. Indeed, if one had applied the UCM to another basis $B$, for instance the canonical basis of polynomials,  the matrix $\tilde{A}$ in \eqref{tildeA} would not have the zero block (or equivalently the second term of the sum  in   \eqref{th q} does not vanishes), and so it will be impossible to obtain the criterium \eqref{degree cond} for the degree.
\item[2)] Implicit in the proof of the above theorem is the following algorithm. Given the nodes $x_1,\ldots, x_n$ and the bases $B_0$ and $B$;  (i) Compute the  weights $W_n$ using  the recursion relation \eqref{coef prop1}; (ii) Find the first indice $j\geq n$ such that  $I(q_j)\neq 0$; (iii) $d=j-1$ is the degree of the rule; (iv) Compute the coefficient $c_n$ in \eqref{error prop}; (v) Output $W_n$ and $E_n$.
\item[3)]   As the degree of precision of a rule cannot be greater that $2n$, it  is  only  necessary to extend the basis $B_0$ to a basis $B$ of $\mathbb{P}_{2n}$.
  It also follows from the proof of Theorem~\ref{prop1} that the polynomials $q_j$  only need to satisfy the requirement of having   all quadrature points as roots. 
\item[4)] From   Theorem~\ref{prop1}, we can conclude  that the quadrature weights $a_i$ of a polynomial rule do not depend wether  the quadrature points belong or not to the integration interval $[a,b]$. However, the value of the integrals $I(q_j)$   do depend on the points's  location relative to the  interval of integration. In the next section we show how the position of the quadrature points may  influence  the  degree of the precision  of  a rule, for instance in the cases of the  Newton-Cotes and   Adams-Bashforth-Moulton rules.
\item[5)] The upper triangular system leading to the computation of the weight's vector $W_n$, has determinant equal to $\det(A) =\prod_{j=1}^{n-1} \varphi_j(x_{j+1})$. For $n$ large and certain sets of nodes we can have $\det(A)\approx 0$. In this case we are facing an ill-conditioned problem which  raises challenging computational issues.
\item[6)] It would  be interesting to apply   Theorem~\ref{prop1}  to     polynomial rules other  than those treated in Section~\ref{sec3}, such as the Clenshaw-Curtis, Patterson, Gauss-Konrad, etc. (see Evans~\cite{evans} for a survey  of other available rules, and references therein). 
\end{itemize}
\end{rems}

\section{Applications to some classical integration rules}\label{sec3}
In this section we apply  Theorem~\ref{prop1} to some classical  rules in order to obtain their weights and the respective error's expression. To illustrate the wide applicability of  our  result we begin with   rules  with equally spaced nodes (Newton-Cotes    and  Adams-Bashforth-Moulton)  followed by   rules  with unequally spaced  points (Gaussian).

\subsection{Weights and error for  Newton-Cotes   and Adams-Bash\-forth-Moulton rules}

The abscissae  for both Newton-Cotes (open or closed) rules and Adams-Bashforth-Moulton  are equally spaced. In the rules of the first type  all the quadrature points belong to the integration interval $[a,b]$ whilst  for the second type there are points outside $[a,b]$.  These particularities  of the nodes  have a decisive influence on the rule's  degree. The next Lemma shows how the  abscissae's distribution  relative  to the integration interval will affect  the values of the integrals $I(q_j)$ in Theorem~\ref{prop1}. Although the results in  Lemma~\ref{lema}  are  scattered  in the literature under somehow different formulations,  we include a proof for the sake of completeness.

\begin{lemma}\label{lema} Let $y_1, y_2, \ldots, y_n$ be real points  such that $y_i=y_{i-1}+h$, for some positive constant $h$, and $q_n(x)= (x-y_1) (x-y_2)\cdots (x-y_n)$.
\begin{itemize}
\item[i)] If $y_1=a$ and $y_n=b$, then 
$$\int_a^b q_n(x) dx=\begin{cases}
0&\quad\text{if $n$ is odd}\\
\neq 0&\quad\text{if $n$ is even}.
\end{cases}$$
\item[ii)] If $q_{n+1}(x)=q_n(x) (x-\tilde{y})$, with $\tilde{y}=y_1$ or $\tilde{y}\notin [y_1, y_{n}]$, then  $\int_{y_1}^{y_{n}} q_{n+1}(x) dx\neq0$.
\item[iii)] If $y_{n-1}=a$ and $y_{n}=b$, then $\int_a^b q_n(x) dx\neq0$.
\end{itemize}
\end{lemma}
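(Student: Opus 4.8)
The plan is to exploit the reflection symmetry of $q_n$ about the midpoint $c=(y_1+y_n)/2$. Since the nodes are equally spaced, the set $\{y_i-c\}$ is symmetric about $0$, which yields the key identity $q_n(c+t)=(-1)^n q_n(c-t)$; that is, $t\mapsto q_n(c+t)$ is even when $n$ is even and odd when $n$ is odd. For part i) I would substitute $x=c+t$, so that $\int_a^b q_n(x)\,dx=\int_{-L}^{L}q_n(c+t)\,dt$ with $L=(b-a)/2$. When $n$ is odd the integrand is odd and the integral vanishes at once, settling that case; the harder even case is addressed below.

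Part iii) I expect to be the easiest, by a direct sign argument requiring no symmetry. Here $q_n(x)=(x-a)(x-b)\prod_{i=1}^{n-2}(x-y_i)$, and since $y_1,\dots,y_{n-2}$ all lie strictly to the left of $a$ one has $\prod_{i=1}^{n-2}(x-y_i)>0$ on $[a,b]$, while $(x-a)(x-b)\le 0$ there, vanishing only at the endpoints. Thus $q_n$ has one (non-positive) sign on $[a,b]$ and is not identically zero, so $\int_a^b q_n\,dx\neq 0$. For part ii) I would instead write $x-\tilde y=(x-c)+(c-\tilde y)$ and split
\[
\int_{y_1}^{y_n} q_n(x)(x-\tilde y)\,dx=(c-\tilde y)\int_{y_1}^{y_n} q_n\,dx+\int_{y_1}^{y_n}(x-c)\,q_n(x)\,dx .
\]
Under $x=c+t$ the two integrands have opposite parities, $q_n(c+t)$ being of parity $(-1)^n$ and $t\,q_n(c+t)$ of parity $(-1)^{n+1}$, so over the symmetric interval exactly one of the two terms survives. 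When $n$ is even the first survives: the hypothesis $\tilde y=y_1$ or $\tilde y\notin[y_1,y_n]$ forces $\tilde y\neq c$, and $\int q_n\neq 0$ by the even case of i), so the expression is nonzero. When $n$ is odd the second survives, reducing the claim to $\int_{y_1}^{y_n}(x-c)\,q_n\,dx\neq 0$, an integral of the same even-degree symmetric type as in the even case of i).

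So everything hinges on one obstacle: showing that an even, symmetric node polynomial has a nonzero integral over its symmetric interval. My plan is the normalization $x=a+sh$, which turns $q_n$ into $h^n\phi_n(s)$ with $\phi_n(s)=s(s-1)\cdots(s-n+1)$, reducing the even case of i) to $\int_0^{n-1}\phi_n(s)\,ds\neq 0$ (and the odd case of ii) to the analogous weighted integral). I would then split this into the unit-interval contributions $A_j=\int_j^{j+1}|\phi_n|$, observe the palindromic symmetry $A_j=A_{n-2-j}$, and prove strict unimodality $A_0>A_1>\cdots$ toward the centre from the pointwise ratio identity $\phi_n(s+1)/\phi_n(s)=(s+1)/(s-n+1)$, whose modulus is $<1$ on the left half of the interval.

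Finally, a short summation-by-parts (pairing) argument on the alternating sum $\sum_j(-1)^j A_j$ of a strictly unimodal palindromic sequence shows that it cannot vanish, since for $n$ even consecutive differences $A_{2i}-A_{2i+1}$ keep a constant sign. I expect this last nonvanishing step, controlling the signed sum through the monotonicity of the $A_j$, to be the delicate part of the whole argument; by contrast the symmetry reductions and the single-sign computation for iii) are routine.
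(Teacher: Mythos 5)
Your skeleton coincides with the paper's: the same midpoint symmetry for (i) (the paper uses the substitution $x=y_1+h\left(t+\frac{n-1}{2}\right)$, you use $x=c+t$), the same decomposition of (ii) into $(c-\tilde y)\int q_n\,dx+\int(x-c)\,q_n\,dx$ with exactly one term surviving by parity and the hypothesis on $\tilde y$ needed only to ensure $\tilde y\neq c$, and the same one-sign argument for (iii). The genuine difference is in what you do with the surviving even-parity integrals. The paper stops there: it asserts that the integral of the resulting even function (the even case of (i), and the term $K_1=\int(x-c)\,q_n\,dx$ when $n$ is odd in (ii)) is nonzero, which evenness alone does not give; the authors are implicitly leaning on results they describe as scattered in the literature. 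Your ratio identity $\phi_n(s+1)/\phi_n(s)=(s+1)/(s-n+1)$, giving strict decrease of the unit-interval areas $A_j$ up to the middle index, together with the palindromic symmetry $A_j=A_{n-2-j}$, supplies precisely the missing nonvanishing proof; and it does extend to the weighted integral needed in (ii) for $n$ odd, since there the extra factor $|s+1-c'|/|s-c'|$ (with $c'=(n-1)/2$) is itself $<1$ on the left half. So your plan, once completed, proves strictly more than the paper's own argument does.

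One step is wrong as literally stated, though you yourself flagged it as the delicate one: for a unimodal palindromic sequence the consecutive differences $A_{2i}-A_{2i+1}$ do \emph{not} keep a constant sign (for $n=6$ one has $A_0>A_1>A_2<A_3<A_4$, so $A_0-A_1>0$ but $A_2-A_3<0$). The repair stays inside your framework: fold the alternating sum by the symmetry first. Since $n-2$ is even, the indices $j$ and $n-2-j$ carry the same sign, so with $m=(n-2)/2$ one gets $\sum_{j=0}^{n-2}(-1)^jA_j=2\sum_{j=0}^{m-1}(-1)^jA_j+(-1)^mA_m$; grouping the strictly decreasing positive numbers $A_0>\cdots>A_m$ in consecutive pairs then makes the whole expression positive for either parity of $m$ (when $m$ is odd, bound it below by $2A_{m-1}-A_m>0$). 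The analogous fold, with no unpaired middle term since $n-1$ is even, handles the weighted sums in (ii). With this correction your argument closes.
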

\begin{proof} The  change of variables $\gamma: x\mapsto t$, defined by $x=y_1+h\left(t+\frac{n-1}{2}\right)$, mapps   $y_1\mapsto -\frac{n-1}{2}, \, y_2\mapsto -\frac{n-1}{2}+1, \ldots, y_{n-1}\mapsto \frac{n-1}{2}-1, y_n\mapsto \frac{n-1}{2}$, and $[a,b]\mapsto [-(n-1)/2, (n-1)/2]$.

If $n$ is odd, the integral $\int_a^bq_n(x) dx$ is equal to the integral of an odd function and so it is zero.That is,
$$\int_a^b q_n(x) dx=h^{n+1}\int_{-\frac{n-1}{2}}^{\frac{n-1}{2}} t (t^2- 1) (t^2-2^2)\cdots \left(t^2-\frac{(n-1)^2}{4}\right) dt =0.$$
 If $n$ is even, then $\int_a^bq_n(x) dx$ is equal to the integral of an even function:
$$\int_a^b q_n(x) dx=h^{n+1}\int_{-\frac{n-1}{2}}^{\frac{n-1}{2}}  (t^2- 1/4) (t^2-9/4)\cdots \left(t^2-\frac{(n-1)^2}{4}\right) dt \neq 0.$$
For ii), we can write $\tilde{y}$ as $\tilde{y}= y_1+ h \left(\frac{n-1}{2}\right)+ r$, where $r$ is some nonzero  constant. Thus, $(x-\tilde{y})= (x-(y_1+ h \left(\frac{n-1}{2}\right))+ r$, and so
\begin{align*}
\int_{y_1}^{y_{n}} q_{n+1}(x) dx&=\int_{y_1}^{y_{n}} q_n(x) \left[x-\left(y_1+ h \left(\frac{n-1}{2}\right)\right)\right] dx- r \int_{y_1}^{y_{n}} q_n(x) dx\\
&= K_1- rK_2.
\end{align*}
Applying (i) and the same change of variables as before, it is easy to conclude: (a) When $n$ is odd, then  $K_2=0$ and $K_1$ is   the integral of an even function in $[-(n-1)/2, (n-1)/2]$, and so $\int_{y_1}^{y_{n}}  q_{n+1}(x) dx\neq 0$; (b) when $n$ is even, then  $K_2\neq0$ and $K_1$ is   the integral of an odd function in $[-(n-1)/2, (n-1)/2]$,  thus $\int_{y_1}^{y_{n}}  q_{n+1}(x) dx\neq 0$.

 (iii):    When $x$ belongs to the open interval $ (y_{n-1}, y_{n})$ the product $(x-y_1) (x-y_2)\cdots(x-y_{n-1})$ is  positive and $(x-y_n)$ does not vanishes. Therefore,  the integrand function $q_n(x)$ does not change sign in $[y_{n-1}, y_{n}]$, hence $\int_{y_{n-1}}^{y_n}q_n(x)\neq 0$.
\end{proof}

\subsection*{Closed Newton-Cotes rules}

In order to approximate $I(f)=\int_a^b f(x)dx$,  the interval $[a,b]$ is divided into $(n-1)$ parts of equal length $h=\frac{b-a}{n-1}$,  and the nodes are:   $x_i=a +i h$, for $i=0,1,\ldots, (n-1)h$. By  a change of variables, one can consider  the $n$ nodes to be $t_1=0, t_2=1, \ldots, t_n =n-1$.  The integral $I(f)$ is
$$
I(f)=\int_a^b f(x)dx= h\int_0^{n-1} g(t) dt,$$
with $g(t)= f(a+th)$. The   rule $Q_n(f)$ is related to the  rule
$$Q_n(g) = b_1 g(0)+b_2 g(1)+\cdots+ b_ng(n-1)
$$
by $Q_n(f) =h\, Q_n(g)$. 
The elements of the basis  $B_0$ in \eqref{basis fi}  are $\varphi_0(t)=1$,  $\varphi_j(t)=\varphi_{j-1}(t)(t-(j-1))$, for $ j=1,2,\ldots, n-1$.  The polynomials $q_n$ and $q_{n+1}$ in  \eqref{def q}  are
$$
q_n(t)=t (t-1)\cdots  (t-(n-1)),
\quad q_{n+1}(t)= t^2 (t-1)\cdots  (t-(n-1)).
$$
By \eqref{coef prop1} of  Theorem~\ref{prop1}, the weights $b_i$  for $Q_n(g)$  are given recursively by
$$
\left\{\begin{aligned}
b_n&= \frac{I(\varphi_{n-1})}{\varphi_{n-1}(n-1)}=\frac{\int_0^{n-1}\varphi_{n-1}(t) dt}{(n-1)!},\\ 
&\\
b_i&= \frac{I(\varphi_{i-1})-\sum_{k=i+1}^n\varphi_{i-1}(k-1) b_k}{(i-1)!},
\end{aligned}\right.
$$
where we have  applied the fact that $\varphi_{k} (k)= k!$. 
The weights  for the rule $Q_n(f)$  are $a_i= h\, b_i$.
 The degree of $Q_n(g)$ is at least $(n-1)$,  and from Lemma~\ref{lema}-(i)  the integral $I(q_n(t))=\int_0^{n-1}t(t-1)\cdots (t-(n-1)) dt$ vanishes when is odd and is nonzero when $n$ is even. Also, by (ii) of the same Lemma, $I(q_{n+1}(t))\neq 0$. Then, by Theorem~\ref{prop1}, the degree $d$ and the error expression $E_{n}(f)$ for  a $n$-point Newton-Cotes rule are respectively,
$$
\begin{cases}
d=n-1, &\text{ if $n$ is even}\\
d=n,&\text{if $n$ is odd}
\end{cases}
$$
and
\begin{align*}
E_n(f)&=I(f)-Q_n(f)= h( I(g)-Q_n(g))\\
&=h \frac{I(q_{d+1})}{(d+1)!} g^{(d+1)}(\tilde{\xi})=h^{d+2} \frac{I(q_{d+1})}{(d+1)!} f^{(d+1)}(\xi),
\end{align*}
where the last equality follows from  the fact that $g^{(k)}(t) = h^k f^{k}(a+ th)$,  and we are assuming  that $f\in C^{d+1}([a,b])$. That is, 
\begin{equation}\label{errorNC}
E_n(f)=h^{d+2} c_n f^{(d+1)}(\xi),\quad \text{with $c_n= \frac{I(q_{d+1})}{(d+1)!}$}.
\end{equation}
Using an analogous  procedure, it is  now a simple exercise  to obtain the pair $(W_n,E_n)$ for a $n$-point  open Newton-Cotes rule. 
\subsection*{The Adams-Bashforth-Moulton rules}

The Adams-Bashforth (AB)  rules are  the core of  explicit methods for ODEs and the Adams-Moulton (AM)  rules play a central role  as  implicit methods for ODEs with the same name (Henrici~\cite{henrici}, Gautschi~\cite{gautschi}, Che\-ney~\cite{cheney}).

The (equally spaced) nodes  for the AB rule are less or equal to $a$ (where $[a,b]$ is the integration interval), while in the AM case the points $a$ and $b$ are quadrature points and there are no other abscissae in the interior of $[a,b]$. The Lemma~\ref{lema} and  Theorem~\ref{prop1} imply  that both the AB and AM rules have degree of precision $d=(n-1)$. 

We now describe in detail the polynomial bases  to be used and we determine the   theoretical error expression for both rules.

\subsubsection*{The Adams-Bashforth rule}

Given  a constant step $h$ and a fixed real number $\tau$,   consider  $n\geq2$ nodes
$x_1=\tau, \,\, x_2= \tau-h,\, \ldots, \, x_n =(\tau-(n-1)h)
$.
The Adams-Bashforth rule aims to approximate the integral $I(f)=\int_\tau^{\tau+h} f(x) dx$ by
$$Q_n(f)= a_1 f(x_1)+\cdots +a_n f(x_n).
$$
Making the change of variables defined by $x=\tau+ h t$,  we have
$$I(f)=\int_\tau^{\tau+h} f(x) dx=h \int_0^1 f(\tau+h t) dt=h \int_0^1 g(t) dt.$$ The nodes $x_i$ are mapped into  
$t_1=0,\,\, t_2=-1, \,\ldots, t_n= - (n-1)$.
The quadrature rule for $f$ is related to the  rule for $g$ by $Q_n(f) = h \,Q_n(g)$, with
\begin{equation}\label{rule AdamB}
Q_n(g)= b_1g(0)+b_2 g(-1)+ \cdots+ b_n g(1-n)=\sum_{k=1}^n b_k \, g(1-k).
\end{equation}
As the integral $I(q_n)=\int_0^1q_n(t) dt$, where $q_n(t) = \varphi_{n-1}(t) (t+(n-1))$, is obviously positive, then
  the degree of $Q_n(g)$ (and so the degree of $Q_n(f)$) is $d=n-1$. Therefore, the error of $Q_n(g)$ is $E_n(g)= h \frac{I(q_n(t))}{n!} g^{(n)}(\tilde{\xi})$ and so
\begin{equation}\label{error AdamB}
E_{n}(f) = h^{n+1}c_n f^{(n)}(\xi), \quad\text{with $c_n =\frac{I(q_n(t))}{n!}$,  \,\, $\xi\in(a,b)$},
\end{equation}
assuming  $f$   to be of class $C^n([a,b])$.

\subsubsection*{The Adams-Moulton rules}

Likewise,   we intend to approximate 
$I(f)=\int_\tau^{\tau+h} f(x) dx$
by the rule $Q_n(f)=\sum_{i=1}^n a_i f(x_i)$, but now the nodes are $x_1=\tau+h, \, x_2=\tau, \,x_3=\tau-h, x_4=\tau-2h, \cdots, x_n=\tau-(n-2)h$. Performing the same change of variables as in the previous rule,  we transform the interval $[2-n, 1]$ into the interval $[0,1]$, and the nodes $x_i$ into the nodes $t_1=1, t_2=0, \ldots, t_n=2-n$. As before $Q_n(f)= h\, Q_n(g)$.  Here the    polynomial $q_n(t)$ is $q_n(t)=\varphi_n(t) (t- (2-n))$ and so, by Lemma~\ref{lema}-(iii),  we obtain $I(q_n(t))=\int_0^1 q_n(t) dt\neq0$. Therefore  $\operatorname{deg} (Q_n(f))=(n-1)$ and  from Theorem~\ref{prop1} it follows an expression for $E_{n}(f)$ similar  to \eqref{error AdamB}.


\begin{rem}
In  the Adams-Bashforth-Moulton rules  the nodes $t_i$ are integers and so  the application of the  UCM leads  to a linear system $Ax=b$, where $A$ is a matrix  of integer entries. The integrals $I(\varphi(t))$ are rational numbers. Therefore both the  weights of $W_n$ and the respective error coefficient $c_n$ in \eqref{error AdamB} can be computed exactly if one uses  a computer algebra system   able to represent exactly rational numbers. Of course the same is true for the Newton-Cotes rules discussed previously.
\end{rem}

\subsection{Gaussian quadrature rules}

Gaussian rules, first introduced by Gauss in \cite{gauss}, rely on the properties of  orthogonal sets of polynomials (see Gautschi~\cite{gautschi2} for a recent account on the subject).
 A Gaussian  rule $G_n(f)=\sum_{k=1}^n a_kf(x_k)$ is used to approximate the integral $I(f)=\int_a^b w(x) f(x) dx$, where the so-called weight function $w$ is assumed to be a positive function defined in $[a,b]$, continuous, and integrable in $(a,b)$.

In the context of Gaussian quadratures the following    $L^2$ inner product plays a central role
\begin{equation}\label{inner}
\langle g,h\rangle=\int_a^b w(x) g(x) h(x) dx.
\end{equation}
The existence of  an orthogonal polynomial  basis $\{O_0, O_1, \ldots, O_n\}$ for $\mathbb{P}_n$  is guaranteed by the Gram-Schmit process,  and it is common  to call  the elements of such bases by  orthogonal polynomials.
The most widely used sets of orthogonal polynomials are  known by the names of  Hermite,  Laguerre, Jacobi, Chebyshev  and  Legendre polynomials. Lists of orthogonal polynomials, respective weight functions and integration interval  can be found,  for instance,  in Gaustchi~\cite{gautschi2}, Evans~\cite{evans} and Krylov~\cite{krylov}. 

 Gaussian quadratures are based on  the choice of the nodes  $x_1, x_2, \ldots, x_n$ as  roots of an orthogonal   polynomial $O_n$ of degree $n$. Here we do not address   the computation of these roots, we  assume  they are available.

The roots of an orthogonal   polynomial  are  simple, real, and lie in the interval $(a,b)$ (see Atkinson~\cite{atkinson} or  Gautschi~ \cite{gautschi2}). Also, as it is  easy to prove,   any orthogonal polynomial $O_j$ of degree $j$ is orthogonal to any polynomial $p$ of degree less than $j$.

The Theorem~\ref{prop1} applied to a $n$-point Gaussian  rule  enables us to recover well known results  for   the degree  and   error expression of this type of rules as shown in the next Proposition.

\begin{prop}\label{error Gauss}
Let $x_1, \ldots, x_n$  be  the roots of  an orthogonal  polynomial $O_n$ of degree $n$. Consider  $I(f)=\int_{a}^b w(x) f(x) dx$, where $w$ is a weight function and $f\in C^{2n}([a,b])$. 

A  Gaussian  quadrature rule $G_n(f)= \sum_{i=1}^n a_i f(x_i)$ for approximating $I(f)$ has degree of precision $d=2n-1$. The respective error $E_n(f)$ is
\begin{equation}\label{error gaussL}
E_{n}(f) =\frac{1}{\alpha^2} \frac{\|O_n\|^2}{(2 n)!} f^{(2n)}(\xi),
\end{equation}
where $\alpha$ is the coefficient of $x^n$ in $O_n$,  $\xi$ some point in $(a,b)$, and the norm $\|O_n\|$ is the $L^2$  norm induced by the inner product \eqref{inner}.
\end{prop}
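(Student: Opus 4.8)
The plan is to invoke Theorem~\ref{prop1} directly, the crux being to identify the polynomials $q_j$ of the extended basis $B$ in terms of the orthogonal polynomial $O_n$ and then exploit orthogonality. First I would observe that, since the nodes $x_1,\ldots,x_n$ are precisely the roots of $O_n$ and $\alpha$ is its leading coefficient, the polynomial $q_n(x)=\varphi_{n-1}(x)(x-x_n)=\prod_{i=1}^n(x-x_i)$ satisfies $O_n(x)=\alpha\,q_n(x)$, that is $q_n=O_n/\alpha$.

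Next I would unwind the recursion~\eqref{def q} to express the higher basis elements multiplicatively. For $0\le m\le n-1$ one checks that $q_{n+m}(x)=q_n(x)\,\varphi_m(x)$, since the successive factors $(x-x_r)$ with $r\equiv j\pmod n$ reproduce, in order, the factors $(x-x_1),\ldots,(x-x_m)$ of $\varphi_m$. Consequently
\begin{equation*}
I(q_{n+m})=\int_a^b w(x)\,q_n(x)\,\varphi_m(x)\,dx=\frac{1}{\alpha}\langle O_n,\varphi_m\rangle .
\end{equation*}
Because $\varphi_m$ has degree $m\le n-1$ and $O_n$ is orthogonal to every polynomial of degree below $n$, this inner product vanishes; hence $I(q_j)=0$ for all $n\le j\le 2n-1$.

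The decisive step is to evaluate $I(q_{2n})$. Taking the representative $r=n$ for $2n\equiv 0\pmod n$, the recursion gives $q_{2n}(x)=q_{2n-1}(x)(x-x_n)=q_n(x)\,\varphi_{n-1}(x)(x-x_n)=q_n(x)^2$. Therefore
\begin{equation*}
I(q_{2n})=\int_a^b w(x)\,q_n(x)^2\,dx=\|q_n\|^2=\frac{1}{\alpha^2}\,\|O_n\|^2>0,
\end{equation*}
the strict positivity being guaranteed by $w>0$. Combining this with the vanishing established above, the criterion~\eqref{degree cond} of Theorem~\ref{prop1}(b) identifies the degree of precision as $d=2n-1$, recovering the familiar optimality of Gaussian rules.

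Finally I would read off the error from Theorem~\ref{prop1}(c): with $d+1=2n$ and $f\in C^{2n}([a,b])$,
\begin{equation*}
E_n(f)=\frac{I(q_{2n})}{(2n)!}\,f^{(2n)}(\xi)=\frac{1}{\alpha^2}\,\frac{\|O_n\|^2}{(2n)!}\,f^{(2n)}(\xi),
\end{equation*}
for some $\xi\in(a,b)$, which is precisely~\eqref{error gaussL}. The only genuine obstacle is the bookkeeping in the second step: one must correctly track the cyclic index $r\equiv j\pmod n$ so as to see both the telescoping $q_{n+m}=q_n\varphi_m$ and the collapse $q_{2n}=q_n^2$. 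Once this multiplicative structure is recognized, the remainder follows immediately from orthogonality of $O_n$ to $\mathbb{P}_{n-1}$ and from the positivity of the weight function.
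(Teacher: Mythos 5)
Your proof is correct, and its central step is genuinely different from the paper's. Both arguments reduce the proposition to the criterion \eqref{degree cond} of Theorem~\ref{prop1}, both obtain $I(q_{2n})>0$ from positivity of $w\,q_n^2$, and both convert $I(q_{2n})$ into $\|O_n\|^2/\alpha^2$ in the same way. Where you diverge is in proving $I(q_j)=0$ for $n\le j\le 2n-1$: you unwind the recursion \eqref{def q} to get the multiplicative identities $q_{n+m}=q_n\varphi_m$ (for $0\le m\le n-1$) and $q_{2n}=q_n^2$, combine them with $q_n=O_n/\alpha$, and conclude $I(q_{n+m})=\tfrac{1}{\alpha}\langle O_n,\varphi_m\rangle=0$ directly from orthogonality of $O_n$ to $\mathbb{P}_{n-1}$. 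The paper instead expands each $q_{n+k}$ in the auxiliary basis $\tilde B=\{O_0,\dots,O_n,O_n\varphi_1,\dots,O_n\varphi_{n-1}\}$ of $\mathbb{P}_{2n-1}$, uses orthogonality to reduce $I(q_{n+k})$ to $\gamma_0\langle O_0,\varphi_0\rangle$, and then shows $\gamma_0=0$ by evaluating the expansion at the nodes and appealing to a homogeneous linear system (which implicitly relies on nonsingularity of the generalized Vandermonde matrix $[O_i(x_j)]$). Your factorization argument is shorter and sidesteps that linear-algebra step entirely; what the paper's argument buys in exchange is generality, since it uses only that $q_{n+k}$ vanishes at all the nodes and therefore applies verbatim to any completion of $B_0$ by polynomials with that property (cf.\ Remark~\ref{rem21}-3), not just to the specific products generated by \eqref{def q}. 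For the proposition as stated, your route is the more economical one and reaches the same conclusion.
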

\begin{proof}  Let  $B_0$ and $B$ be the bases,  respectively for  $\mathbb{P}_{n-1}$ and $\mathbb{P}_{2n}$,    defined by \eqref{basis fi} and \eqref{def q}. 
 By  Theorem~\ref{prop1} it is sufficient to prove that $I(q_{2n})\neq 0$ and $I(q_j)=0$ for $n\leq j\leq 2n-1$. 
 It is obvious  that $I(q_{2n})\neq 0$ since  the polynomial $q_n$ is the positive function  $q_{2n}=(x-x_1)^2(x-x_2)^2\cdots (x-x_n)^2$, and so $I(q_{2n})=\int_{a}^bw(x)q_{2n} (x) dx>0$. Hence, the degree of a $n$-point  rule is at most $2n-1$.

Let us now prove that $I(q_{n+k})=0$, for all  $0\leq k\leq n-1$.   For, consider the following basis  of $\mathbb{P}_{2n-1}$:
$$\tilde{B}=\{O_0, O_1, \ldots, O_n, O_n\varphi_1, O_n\varphi_2, \ldots, O_n \varphi_{n-1}\},$$
where $\{O_0, O_1, \ldots, O_n\}$ is an orthogonal polynomial basis of $\mathbb{P}_n$.
Any polynomial $q_{n+k}$ of degree $n+k$ (with $0\leq k\leq n-1$) can be written as a (unique) linear combination of the elements of $\tilde{B}$, that is
\begin{equation}\label{eq qnk}
q_{n+k}(x) =\sum_{i=0}^n\gamma_i O_i(x)+ \sum_{j=1}^k\gamma_{n+j} O_n(x)\varphi_j(x).
\end{equation}
For any integer $0\leq k\leq n-1$, the integral $I(q_{n+k} )$ is
\begin{align}\nonumber
I(q_{n+k} )&=\int_{a}^b w(x) q_{n+k} (x) dx=\langle q_{n+k} , \varphi_0\rangle\\\nonumber
&= \sum_{i=0}^n\gamma_i \langle O_i, \varphi_0\rangle+ \sum_{j=1}^k\gamma_{n+j} \langle O_n\varphi_j, \varphi_0\rangle\\ \label{eq qnkint}
&=\gamma_0 \langle O_0, \varphi_0\rangle+  \sum_{j=1}^k\gamma_{n+j} \langle O_n, \varphi_j\rangle =\gamma_0 \langle O_0, \varphi_0\rangle,
\end{align}
where  the last two equalities follow from  the fact that an orthogonal polynomial $O_j$ of degree $j$  is orthogonal to any polynomial of degree less than $j$. 

Evaluating \eqref{eq qnk} at the nodes $x_1, \ldots, x_n$, and taking into account  these nodes are roots of both $O_n$ and  $q_{n+k}$, we obtain a homogeneous linear system $Bx=0$, in the unknowns $\gamma_0, \gamma_1, \ldots, \gamma_{n-1}$, such that the first column of $B$ has all entries equal to 1. Thus,  $\gamma_0=0$ and from \eqref{eq qnkint} we get $I(q_{n+k} )=0$.

It remains to show that the error expression has the form given  in \eqref{error gaussL}. As  $x_1, \ldots, x_n$ are the roots of $O_n$, we have $O_n=\alpha (x-x_1)\cdots (x-x_n)$, where $\alpha$ is the coefficient of  $x^n$ in $O_n$. Then,
$$\|O_n(x)\|^2=\langle O_n(x), O_n(x)\rangle = \alpha^2\int_{a}^b w(x) O_n^2(x) dx=  \alpha^2 \int_{a}^b w(x) q_{2n}(x) dx.
$$
So, $I(q_{2n})=\frac{1}{\alpha^2}\|O_n(x)\|^2$ and  \eqref{error gaussL}  follows from \eqref{error prop}.
\end{proof}

\subsubsection*{Example: Gauss-Legendre rule}

The Legendre polynomials, usually denoted by $P_n$,  are orthogonal with respect to  the   weight function  $w(x)=1$ in   the integration interval   $[-1,1]$.
 The coefficient $\alpha$ and  $\|P_n\|^2$ in the error expression  \eqref{error gaussL}  satisfy the following   equalities (see, for instance Davis~\cite{davis}, p. 33):
\begin{equation}\label{last}
\alpha=\frac{(2n)!}{2^n (n!)^2}, \quad \|P_n\|^2=\frac{2}{2n+1}.
\end{equation}
Thus, from \eqref{error gaussL},  the  error expression for the Gauss-Legendre rule is
\begin{equation}\label{lasta}
E_{G_n}(f) = c_n f^{(2n)}(\xi)\quad\text{with $c_n=\frac{2^{2n+1} (n!)^4}{(2n+1)[(2n)!]^3}$}, \quad \xi\in(-1,1),
\end{equation}
agreeing with the well known expression for the error of a $n$-point Gauss-Legendre rule (see, for instance,  Atkinson~\cite{atkinson}, p. 276).

\section*{ Computational remarks}

The computational efficiency  of the algorithm referred in Remark~\ref{rem21}-2),   either  for the  rules previously discussed   or any other polynomial quadrature rule one may construct, deserve  further studies.   However,  we have yet   developed a {\sl Mathematica}  (Wolfram \cite{wolfram1})  code for the referred algorithm which, for a given number  $n$ of nodes,  produces  a list $\{nodes,weights, c_n\}$,  where $c_n$ is the coefficient in the respective error expression $E_n(f)$.  This code has been implemented for all  the rules detailed in the previous section and the computed coefficients $c_n$ agree with those in \eqref{errorNC}, \eqref{error AdamB} and \eqref{lasta}.   Whenever possible the elements of the mentioned  list  have been computed exactly, such as in the cases of Newton-Cotes, Adams-Bashford and Adams-Moulton rules. Our  program has   been also  tested for the Gauss-Legen\-dre rule for a wide range of nodes number. For this rule, when $2\leq n\leq 5$,  we   obtained  closed expressions for the respective   pairs $(W_n,c_n)$,  while for $6\leq n\leq 256$ we have  computed $100$\--digits of precision approximations for the   weights. The computations were carried out on a small laptop. The control of the error   in the computation of the weights  has been monitored  taking into account that the sum of the  weights  satisfy the equality $\sum_{i=1}^n a_i=2$, and knowing that all  the weights in $W_n$ are positive.  Our numerical results were compared with those  available in some published   tables:  for $2\leq n\leq 512$,   Stroud and Secrest  (\cite{stroud}, p. 99), $[30S]$;     for $2\leq n\leq 48 $, Krylov (\cite{krylov}, p. 337),  $[20S]$;  for  $2\leq n\leq 128$,  Evans  (\cite{evans}, p. 303),  $[30S]$;  for   $2\leq n\leq 16$,   Kythe and Sch\"{a}ferkotter  (\cite{kythe}, p. 505), $[16S]$ \---  the symbol $kS$ means $k$ significant digits  in the corresponding table. 

We consider our high precision  {\it in situ}  computation of nodes,  weights, and error coefficients  for the $n$-point  Gauss\--Legendre's rule   a tribute to the pioneering work of  Lowan, Davids and Levenson  \cite{lowan1} and  \cite{lowan2}, who managed  to publish a 16-digits precision Gauss-Legendre table, for $2\leq n\leq 16$, in the terrible years of WWII.

 \bigbreak
\noindent
{\bf Acknowledgments}

\noindent
MMG is supported by  Instituto de Mecânica\--IDMEC/IST, Centro de Projecto Mecânico, through FCT (Portugal)/program POCTI.  MESD is supported by the FCT through the Program POCI 2010/FEDER.


\end{document}